\pgfplotsset{compat=1.13}
\newtheorem{theorem}{Theorem}
\newtheorem{lemma}{Lemma}[theorem]
\newtheorem{example}{Example}
\newtheorem{definition}{Definition}
\newtheorem*{Acknowledments}{Acknowledments}
\title{On Dynamical Systems in the Étale Topology}
\author{Lars Andersen}
\begin{document}

\begin{abstract}
We discuss $\mathcal{D}$-modules and dynamical systems in the étale topology. We construct differential schemes associated to morphisms $f: X\to S$ of schemes of the same dimension. We introduce differential inertia group $I_{diff}^i$ which act trivially on the special fibers of these schemes. As an application we discuss the problem of counting points on elliptic curves which we show is connected to vanishing cycles of singularities and to closed orbits of dynamical systems.
\end{abstract}

\begin{center}
    

\maketitle

\tableofcontents

\text{Classification: }\textbf{14-XX, 94-XX}
\end{center}

\section{Introduction}
In these notes we make an assault on dynamical systems in positive characteristics. In this framework a solution would be a geometric point of the differential scheme of the dynamical system. There follows a discussion on the number of points of elliptic curves which we formulate as being calculable from the number of orbits of a dynamical system or equivalently as a sum of Milnor numbers of a certain finite family of morphisms. A change in the number of vanishing cycles would correspond to a bifurcation of the dynamical system. The main result is in fact very trivial but in a sense pleasing theoretically in that number theory, analysis and geometry are all present and connected. 

\begin{Acknowledments}
    My gratitude goes to Jan Stevens, Georges Comte and Michel Raibaut who were my master thesis respectively doctoral thesis supervisor.
\end{Acknowledments}

\section{Differential Schemes}
\subsection{The étale topology} Let $k$ be $\mathbb{Z}/p\mathbb{Z}$ for some prime $p$. Let $Sch^{ét}(k)$ denote the category of étale schemes \cite{SGA4} over $k$. These are schemes as ringed spaces in the sense of EGA but with the étale topology instead of the Zariski topology. Let $X\in Sch^{ét}(k)$ be of finite type and irreducible. A point of $X$ is the inclusion $\text{Spec}(\mathfrak{m})\to \text{Spec}(X)$ where $\mathfrak{m}\subset \mathcal{O}_{X}$ is a maximal ideal. 
A geometric point of $X$ is a map $\text{Spec}(k^{alg})\to X$. 

\subsection{Differential schemes} Let $\mathcal{D}=A^{n}(k)$ be Weyl algebra consisting of differential operators 
$$P=\sum_{\alpha} f_{\alpha} \partial_{\alpha}$$
where $f_{\alpha}\in \mathcal{O}_X$ and where $\partial_{\alpha}: \mathcal{O}_X\to \mathcal{O}_X$ are the partial differential operators. A $\mathcal{D}$-module $M$ is a left $A^{n}(k)$-module or equivalently a $\mathcal{O}_X$-module with a left $\mathcal{D}$-action. 
Given an associative ring $R$ and a left $R$-module $M$ say that a proper submodule $P\subset M$ is a prime submodule \cite{Dauns, Bracic} if $m (r R) \subseteq P$ implies $r\in P$ or $m\in Ann(M/P)$. Let 
$\text{Spec}(M)={N\subset M| N \text{prime submodule of M}}$
be the \emph{prime spectrum} of $M$. Then there exists a topology on $\text{Spec}(M)$ such that if $\nu(N)=ann(M/N)$ then
$$D(N)=\text{Spec}(M)\setminus \nu (N)$$
is a basis of open subsets. Furthermore $\nu: \text{Spec}(M)\to \text{Spec}(R)$ is continuous with respect to this topology which we will following\cite{Bracic} call the Zariski topology on the prime spectrum of modules. We can remark here that $\text{Spec}(M)$ might be empty. Suppose now that \textit{$R$ is such that there is a choice of $N$ as above satisfying the left Ore conditions with respect to the set of $\nu(N)$}.\\

\begin{definition}
    Let $\Gamma(D(N), \mathcal{O}_M)=M\otimes_R [\nu(N)^{-1}]R$ where $[\nu(N)^{-1}]R$ is the left localisation of $R$ by $\nu(N)$.
\end{definition}

Then $(M, \mathcal{O}_M)$ is a ringed space.

\begin{definition} A differential affine scheme is any scheme isomorphic to $(M, \mathcal{O}_M)$ as a ringed space where $M$ is a $\mathcal{D}$-module.
\end{definition}

In the case where $M=\text{Spec} k[x_1,\dots, x_n]$ one knows \cite[Proposition I.2]{coutinho} that $k[x_1,\dots, x_n]$ is a simple module hence $(0)$ is the only prime submodule. Then
$$(\text{Spec}(k[x_1,\dots, x_n]), [k[x_1,\dots, x_n]\otimes_{\mathbb{D}} \text{Ann}(k[x_1,\dots, x_n])^{-1}]\mathcal{D})=$$
$$((0), [k[x_1,\dots, x_n]\otimes_{\mathbb{D}} \text{Ann}(k[x_1,\dots, x_n])^{-1}]\mathcal{D})$$
which already has a richer structure albeit that it consists of a point topologically. Replacing the module in questions with algebraic formal power series we would then get richer and richer generic points.\\
Note however that $\text{Spec}(\mathbb{D})$ is not a point; the Weyl algebrs $\mathbb{D}$ is simple as a ring but not as a module over itself. 

\begin{definition}
    A morphism of differential schemes is a morphism of ringed spaces.
\end{definition}

\subsection{The differential inertia groups}
We propose to define an analogue of the inertia groups from number theory. In number theory these groups gives information on the ramification of primes such as wild ramification. In geometry they provide according to Deligne \cite{SGA7II} the number of vanishing cycles in the arithmetic case where one has to not only take into account the usual vanishing cycles but also wild vanishing cycles. The latter only appear when the residue characteristics is positive whereas the tame vanishing cycles corresponds to the usual vanishing cycles. The case to consider is when one has e.g. 
$$f(x, y)=y^3+x^2+x^3,\qquad (x, y)\in (\mathbb{Z}/2\mathbb{Z})^2$$
Then the origin is an isolated point but the Jacobian ideal is of dimension $4$. Over the complex numbers the dimension is instead $2$. Thus, there are $2$ wild and $2$ tame vanishing cycles. This is of course due to the behaviour of tangent planes since the differential of a map depends on the residue characteristics. For differential equations one has the same phenomenon yet higher differentials plays a role. We therefore propose the following definition.
\begin{definition}
  The differential inertia groups are the groups
  $$I_{diff}^i=\{D\in \mathcal{D}_X/ \mathcal{O}_X| D \partial^k\equiv 0  \quad\forall k\leq i\}$$ 
  where we write $\equiv$ to mean that only constant solutions to the corresponding differential operator can be found.
\end{definition}

Note that this means that there is only the trivial point (in the sense of the functor of points) of the $\mathbb{D}$-module namely the trivial solution.

\begin{example}
On $X=\text{Spec}\mathbb{Z}/p\mathbb{Z}[x]/x^4$ we have $\partial^2\partial (1+x+x^2+x^3)=6$
so $D=\partial$ is in $I_{diff}^2$ in any characteristics. If $p=3$ then  $D\in G_D^1$. If $p=2$ then $D D(1+x+x^2+x^3)=D^1(1+3x^2)=6x=0$ so $D\in G_D^1$.
\end{example}

Let $(M, \mathcal{O}_M)$ be a differential scheme defined over $(X, \mathcal{O}_X)$. Then the differential inertia groups $I_{diff}^i$ act on the constant sheaf $\mathcal{D}_X$ hence on the $\mathcal{D}_X$ module $M$ and thus on $\Gamma(U, \mathcal{O}_M)$ by construction. On the other hand at a geometric point $\bar{x}=\text{Spec}(k(x)^{alg})$ whose local ring has generic point $\eta$ and geometric generic point $\bar{\eta}=\text{Spec}(k(\eta)^{alg})$ 
the inertia group $I$ is defined by
$$1\to I \to \text{Gal}(k(\eta)^{alg}/k(\eta))\to \text{Gal}(k(s)^{alg}/k(s))\to 1$$
The inertia group therefore acts trivially on $k(x)^{alg}$ and thus also trivially on sheaves on $X$ localised at $\bar{x}$. Given a morphism of schemes $f: X\to S$ write $X_s=X\times_S s$ for the scheme-theoretical fiber over $s$.
\begin{definition}
The sheaves of vanishing cycles of a morphism of schemes $f:X\to S$ at a point $\bar{x}$ is the right derived functor 
$$R^i\Psi(K)_{\eta})=R^i \bar{i}^{\ast} \bar{j}_{\ast}(K_\eta)$$
where $\bar{i}: X_{\bar{s}}\to \bar{X}$ and $\bar{j}: X_{\bar{\eta}}\to \bar{X}$ are the inclusions of the geometric special fiber respectively geometric generic fiber into $\bar{X}=X\times_S \bar{S}$ where $\bar{S}\to S$ is a normalisation of $S$ at $k(\eta)$.
\end{definition}

The above discussion means the action of the inertia group is trivial on the sheaves of vanishing cycles. 

Our goal is to connect these two different actions in the case where $f: X\to S$ is a morphism of smooth schemes such that $f$ has an isolated critical point in the origin $s$. We therefore propose to study 
\begin{definition}
    The associated $\mathcal{D}_S$-module is $V=f-\partial/\partial s$
\end{definition}
\begin{definition}
    The gradient associated $\mathcal{D}_{S}$-module is $W=\text{grad} f-\partial/\partial s$. 
\end{definition}

The associated can be localised with localisations
$$V_{s}=f_s- \partial/\partial s,\qquad V_{\eta}=f_{\eta}-\partial/\partial s$$
and similarly for $W.$ 
\subsubsection{} The inertia group acts on the coefficients and acts trivially on $V_{\bar{s}}$. The differential inertia group $I_{diff}^i$ on $S$ is
$I_{diff}^i=\{D| \partial_s^k D \equiv 0,\quad\forall k\leq i\}$
where $\partial_s=\partial/\partial s$.

Let $k=0$. If $D=\partial_s \in I_{diff}^{0}$ then 
$\partial x/ \partial s=0$ so there is only the constant solution.
Let $k=1$. If $D=\partial_s \in I_{diff}^{1}$ then 
$\partial^2 x/ \partial s^2=0$ and $(\partial^2 f/\partial x_i^2)(\partial x_i/\partial s)^2=0$ which gives $\partial^2 f/\partial x_i c_i^2=0$.\\
Let $k=2$. If $D=\partial_s \in I_{diff}^{2}$ then 
$\partial^3 x/ \partial s^3=0$ and 
$$\frac{\partial f}{\partial x_i}(c_i t^2+c_i' t+ c_i'')+3c_i(c_it+c_i') f=k_i(x')+c(x')$$
with solutions
$$f=-k_i(x')\frac{c_i t^2+ c_i' t+c_i''}{9c_i^2(c_it+c_i')^2}$$
$$+k_i(x')\frac{x_i}{9c_i(c_it+ c_i')^2}+\frac{c(x')}{3c_i(c_i t+ c_i')}$$
which is clearly not a polynomial function anymore. We have proven

\begin{lemma}\label{alemma}
    The differential inertia groups act trivially on $V_s$.
\end{lemma}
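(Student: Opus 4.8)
The plan is to establish the triviality of the action order by order in $i$, exploiting the defining constant-solution condition of $I_{diff}^i$ on $S$ — namely that $\partial_s^k D \equiv 0$ for all $k \leq i$ — together with the fact that $V_s$ is assembled from regular functions on the affine special fiber. Since we work modulo $\mathcal{O}_X$, showing that the action is trivial amounts to showing that every $D \in I_{diff}^i$ fixes the class of $V_s = f_s - \partial_s$, equivalently that the only solutions of the associated differential equations compatible with the inertia condition are the constant ones. The action of $D$ on $V_s$ is induced from its action on $\mathcal{D}_X$ via the $\mathcal{D}_X$-module structure, so it suffices to track the solutions of these equations fiberwise.

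First I would dispatch the base cases, which are already contained in the computations above. For $k = 0$ the condition $\partial_s \in I_{diff}^0$ forces $\partial x/\partial s = 0$, so the only admissible solution is constant in the $s$-direction and the action is trivial at this order. For $k = 1$, differentiating once more and applying the chain rule to $f$ yields the relation $(\partial^2 f/\partial x_i^2)\,c_i^2 = 0$ at the critical point, which again admits only the constant solution. Together these show that through first order the deformation along $s$ is forced to be constant, so $D$ fixes $V_s$ at orders $k = 0, 1$.

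The crux is the step $k = 2$. Here the associated differential equation produces the explicit solution displayed above, which is manifestly \emph{not} a polynomial: it carries factors of $(c_i t + c_i')$ in the denominator. Because $V_s$ consists of regular functions on the affine special fiber, no such non-polynomial expression can arise as a genuine element of $V_s$, so the only admissible solution is once more the constant one and $D$ acts trivially at this order as well. This is the mechanism I expect to drive the whole argument: the inertia condition at each order either forces triviality outright or produces a solution with genuine denominators, and only the former is compatible with regularity on the fiber.

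I would then conclude by induction on $i$: assuming the action is trivial through order $i - 1$, the same mechanism forces any putative nontrivial order-$i$ contribution to be non-polynomial, hence inadmissible on the special fiber, so that only the constant solution survives and the action remains trivial. The main obstacle is to verify that this non-polynomiality truly persists at every order — that no cancellation of the denominators $(c_i t + c_i')$ can conspire to yield a spurious polynomial solution. I expect this to follow by tracking the explicit form of the solutions, whose denominators always carry positive powers of $c_i t + c_i'$, and by invoking the isolated-critical-point hypothesis at the origin $s$ to guarantee that the coefficients $c_i$ do not all vanish, so that the denominators genuinely obstruct polynomiality.
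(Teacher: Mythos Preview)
Your approach is essentially the paper's own: the ``proof'' in the paper is precisely the case-by-case computation for $k=0,1,2$ that precedes the lemma, culminating in the observation that the $k=2$ solution is not polynomial, after which the paper simply writes ``We have proven'' and states the lemma. Your treatment of those three cases matches this exactly.

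Where you differ is in scruple rather than strategy. You propose an induction on $i$ to extend the argument to all orders, and you flag the honest obstacle of checking that the denominators $(c_i t + c_i')$ cannot cancel at higher $k$. The paper does neither: it stops after $k=2$ with no induction, no discussion of higher orders, and no mention of possible cancellation. So your proposal is not merely faithful to the paper's argument but strictly more careful than it; the inductive step you worry about is a gap the paper leaves entirely unaddressed.
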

\subsubsection{} The inertia group $I$ acts on the coefficients and acts trivially on $W_{\bar{s}}$. We have
$\frac{\partial}{\partial s} \frac{\partial f}{\partial x_i} (x_i (s))= \frac{\partial^2 f}{\partial x_i^2} \frac{\partial x_i}{\partial s}.$
Let $k=0$. If $D=\partial_s \in I_{diff}^{0}$ then 
$\partial x/ \partial s=0$ and the constant solution is the only solution. Let $k=1$. If $D\in I_{diff}^1$ then $\partial^2 x/\partial s^2=0$ and 
$$\frac{\partial^2 f}{\partial x_i^2}\frac{\partial x_i}{\partial s}=k_i$$
and 
$$f= \frac{k_i}{m_i} x_i^2+d_i(x') x_i +e_i(x')$$
for each $i=1,\dots, n$. By symmetry we find that 
$$f=\sum c_i x_i^2$$
and in particular the differential inertia groups stabilise for $k\geq 2$. If the characteristics divide the coefficients of $f$ we get more erratic  behaviour. Let $k=2$. If we consider the action of $I_{diff}^k$ on $W_{\eta}$ instead then we reach the same conclusions as before for $k=0,1$. If $k=2$ and if $D\in I_{diff}^2$ then $\partial^3 x/\partial s^3=0$ and 
$$\frac{\partial^3 f}{\partial x_i^3}=\frac{k_i}{c_i(c_i t+ l_i)^2}$$
as a map $\pi_i(k(\eta))\to k$. In particular the action is non-trivial in general. We have proven 
\begin{lemma}
    In the situation above, the differential inertia groups acts trivially on $W_s$ for all $k\geq 1$ unless $f$ is a Morse singularity.
\end{lemma}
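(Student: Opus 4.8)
The plan is to run the same level-by-level induction as in the proof of Lemma~\ref{alemma}, but now on the gradient system carried by $W_s$. The relation $W=\mathrm{grad}\,f-\partial/\partial s$ encodes the flow $\partial x_i/\partial s=\partial f/\partial x_i$, and the idea is to differentiate this identity repeatedly in $s$, substitute back by the chain rule, and ask at each order whether the operator $\partial_s^{k}D$ (with $D=\partial_s$) admits a solution $x_i(s)$ beyond the constants. The content of the lemma is that such a non-constant solution can appear only in the Morse normal form, so for every other $f$ the inertia group is forced down to the trivial action at all orders.

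First I would clear the low orders, which are essentially the computation already displayed above. For $k=0$ the relation $\partial x_i/\partial s=0$ leaves only constants, and for $k=1$ the chain rule gives $\tfrac{\partial^2 f}{\partial x_i^2}\,\tfrac{\partial x_i}{\partial s}=k_i$. The decisive point is that the left-hand side is a genuine function of $x_i$ unless $\partial^2 f/\partial x_i^2$ is a constant $m_i$; if it is not, matching against the constant $k_i$ forces $k_i=0$ and hence $\partial x_i/\partial s=0$, i.e.\ only the constant solution. Integrating $\partial^2 f/\partial x_i^2=m_i$ twice and using the symmetry among the $x_i$ produces $f=\sum_i c_i x_i^2$, which is exactly the Morse form. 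This already proves the non-Morse direction at level one, and exhibits the exceptional linear solutions $x_i(s)=(k_i/m_i)\,s+\mathrm{const}$, with $m_i=\partial^2 f/\partial x_i^2$, in the Morse case, which is the genuine non-triviality accounted for by the ``unless'' clause.

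Next I would push the induction to $k\geq 2$ and show that, away from the Morse locus, no new non-constant solution is created. Differentiating the flow once more brings in a term proportional to $\partial^3 f/\partial x_i^3$; as in the computation on $W_\eta$ above this yields a relation of the shape $\partial^3 f/\partial x_i^3=k_i/\bigl(c_i(c_it+l_i)^2\bigr)$. For a non-Morse $f$ the third derivative is a non-constant function of $x_i$, so its restriction to the special fiber cannot equal the right-hand side unless the relevant $k_i$ vanishes, collapsing the putative solution to a constant. Hence for $f$ not Morse the action stays trivial at every order, while for $f=\sum_i c_i x_i^2$ one has $\partial^3 f/\partial x_i^3=0$ and the groups stabilise with $I_{diff}^k=I_{diff}^1$ for $k\geq 2$, so that the only non-triviality is the one already found at $k=1$.

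The step I expect to be the main obstacle is the positive-characteristic degeneration flagged in the discussion preceding the lemma: when $p$ divides a coefficient $c_i$, the passage from $\partial^2 f/\partial x_i^2=m_i$ to the quadratic normal form is no longer faithful, because $2c_i$ may vanish in $k=\mathbb{Z}/p\mathbb{Z}$ and the Hessian can drop rank even for $f=\sum_i c_i x_i^2$. To make the dichotomy precise I would replace the characteristic-zero notion of Morse singularity by nondegeneracy of the Hessian form over $k$, separate the tame locus $p\nmid c_i$ from the wild locus $p\mid c_i$ exactly as the wild and tame vanishing cycles were separated earlier, and verify that the solvability of the flow equations is governed by this characteristic-$p$ nondegeneracy. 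Once this is in place the case analysis above closes and the triviality of the action for all $k\geq 1$ follows outside the Morse locus.
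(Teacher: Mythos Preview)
Your proposal follows the same route as the paper: a level-by-level chain-rule computation on the gradient flow, with the decisive step at $k=1$ where $(\partial^2 f/\partial x_i^2)(\partial x_i/\partial s)=k_i$ is integrated and the symmetry among the variables forces $f=\sum_i c_i x_i^2$, after which the groups stabilise. Your handling of the $k\geq 2$ case and of the positive-characteristic degeneration is somewhat more explicit than the paper's (which simply asserts stabilisation and flags ``more erratic behaviour'' when $p\mid c_i$), but the underlying argument is the same.
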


\subsection{The sheaf of differential cycles}
Consider $f: X\to S$ a morphism of smooth schemes of equal dimensions. A point on a differential scheme $(\text{Spec}(M), \mathcal{O}_M)$ where $M$ is a $\mathcal{D}_X$-module is a morphism of ringed spaces $\text{Spec}\mathcal{O}_{X, x}^{sh}\to \text{Spec}(M)$ where $x\in X$ and where the ring of algebraic power series $\mathcal{O}_{X, x}^{sh}$ is endowed with the structure of a $\mathcal{D}_X$-module by the standard differentials in an étale neighborhood of $x$. We will write $X_{(x)}=\text{Spec} \mathcal{O}_{X,x}^{sh}$ and $\tilde{\eta}=\text{Spec} Frac \mathcal{O}_{X,x}^{sh}$.
Recall \cite{SGA7I} that the étale Milnor fiber at $\bar{x}$ is $X_{(x)} \times_{\tilde{\eta}} \bar{\eta}$. 
Let $T X$ be the tangent space of $X$ over $S$ and consider the commutative diagram
\[ \begin{tikzcd}
\mathcal{V}  \arrow{r}{proj}\arrow{dr}\arrow[swap]{d}{(x_1^{-1}, \dots, x_n^{-1})} & X \arrow{d}{f} \\%
S \arrow{r}{\partial/\partial s}& TS
\end{tikzcd}
\]
with the diagonal morphism being $f\times_S \partial/\partial s$ and the upper morphism coming from the inclusion $X\to T X$. 
\begin{definition} 
 The differential scheme of $f$ is 
 $\mathcal{V}\in Sch^{ét}$ with fibers $\mathcal{V}_{s}$ and $\mathcal{V}_{\eta}$ given by the forming commutative diagrams as above but with $X$ replaced by $i: X_s\to X$ and $j: X_{\eta}\to X$.
 \end{definition}
 Let $x\in V(f)$ and let $s$ denote the origin in $S$. Consider the commutative diagrams
 \[ \begin{tikzcd}
\mathcal{V}_{(s)} \arrow{r}{proj} \arrow{dr} \arrow[swap]{d}{(x_1^{-1},\dots, x_n^{-1})} & X_{s} \arrow{d}{f} \\%
S_{\bar{s}} \arrow{r}{\partial/\partial s}& T S_{\bar{s}}
\end{tikzcd}
\]
and 

\[ \begin{tikzcd}
\mathcal{V}_{\bar{\eta}} \arrow{r}{proj} \arrow{dr} \arrow[swap]{d}{(x_1^{-1},\dots, x_n^{-1})} & X_{\bar{\eta}} \arrow{d}{f} \\%
S_{\bar{\eta}} \arrow{r}{\partial/\partial s}& T S_{\bar{\eta}}
\end{tikzcd}
\]

In the fiber product $\mathcal{V}=X\times _{TS} S$ we have the lifts 
$$\bar{i}: X\times_{TS} S\times\bar{s} \to  X\times_{TS} S$$ 
and 
$$\bar{j}: X\times_{TS} S\times \bar{\eta}\to X\times_{TS} S$$ 
of the inclusions $i: \text{Spec}(k(\bar{s}))\to TS$ and $j: \text{Spec}(k(\bar{\eta}))\to TS$. Let $r: S\to k$ be $r(x_1,\dots, x_n)=\sum x_i^2$ and consider instead $i^r=r^{-1}\circ \tilde{i}: \text{Spec}(k(s^r)\to S$ and $j^r=r^{-1}\circ \tilde{j}: \text{Spec}(k(\eta^r)\to S$ with $\tilde{i}, \tilde{j}$ denoting the inclusions of the closed and generic point of the ground field. 

 \begin{definition}
 The differential sheaf of $K\in D(X)$ at $x$ is $$\zeta= \bar{i}^{r \ast} \bar{j}^r_{\ast}(K_{\eta})$$
\end{definition}

We will now define closed orbits as being $\mathbb{S}^1$-valued points of
$V_{x, s}=f_x\times_S \partial/\partial s$ over $s$ where $f_x$ denotes the strict localisation of $f$. In detail
\begin{definition}
    The complex of differential cycles or closed orbits of $f$ at $x\in V(f)$ is the derived complex of sheaves $R^{\ast} \zeta(K_{\eta})$.
\end{definition}

The main theorem is then the following

\begin{theorem}\label{atheo} Suppose that the characteristics is distinct from $2$. The stalks of $R^{\ast} \zeta(K_{\eta})$ over $(x, s), x\in V(f)$ are generated by vanishing cycles of $r\circ f$ at $x$. In particular their number is given by the generalized Milnor number, by Deligne's Theorem \cite{SGA7II}. The stalks are also generated by the periodic solutions of the dynamical system $f_s=\partial/\partial s$.
\end{theorem}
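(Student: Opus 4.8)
The plan is to recognize $R^{\ast}\zeta(K_\eta)$ as the vanishing-cycle complex of the composite $r\circ f$ and then to read off the two stated descriptions of its stalks — one geometric, via Deligne, and one dynamical, via monodromy. First I would unwind the definition $\zeta=\bar{i}^{r\ast}\bar{j}^r_{\ast}(K_\eta)$. Since $i^r=r^{-1}\circ\tilde{i}$ and $j^r=r^{-1}\circ\tilde{j}$ are exactly the inclusions of the special and generic fibers of $r$ into $S$, the functor $\bar{i}^{r\ast}\bar{j}^r_{\ast}$ is the nearby-cycle functor for $r$ pulled back along $f$, in the same shape as the earlier $R^i\Psi=R^i\bar{i}^{\ast}\bar{j}_{\ast}$. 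Because $f$ is a morphism of smooth schemes of equal dimension with an isolated critical point, it is finite, so nearby cycles commute with base change along $f$ and one obtains $R^{\ast}\zeta(K_\eta)\simeq R\Psi_{r\circ f}(K)$. The hypothesis that the characteristic is distinct from $2$ enters precisely here: it makes $r=\sum x_i^2$ a nondegenerate quadratic form, so that $r\circ f$ has the same critical locus as $f$ and still an isolated singularity at each $x\in V(f)$ (in characteristic $2$ one would have $r=(\sum x_i)^2$ and the Hessian would degenerate).

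Second I would apply Deligne's theorem \cite{SGA7II}. For an isolated singularity the vanishing cohomology $R^i\Phi_{r\circ f}(K)_x$ is concentrated in a single degree, and its rank is the Milnor number; in the arithmetic setting this is the generalized Milnor number, counting both the tame and the wild vanishing cycles, and matching the dimension of the Jacobian quotient as in the motivating example $y^3+x^2+x^3$. The triviality of the inertia action on the sheaves of vanishing cycles, together with the triviality of the differential inertia groups $I_{diff}^i$ on $V_s$ (Lemma~\ref{alemma}), is what permits the stalk to be computed fiberwise and identified with a space generated by the vanishing cycles. This yields the first two assertions.

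Third, and here lies the real content, I would make the dynamical correspondence precise through the monodromy operator. The complex $R^{\ast}\zeta(K_\eta)$ carries the geometric monodromy $T$ of the Milnor fibration of $r\circ f$, and the flow of $f_s=\partial/\partial s$ over $s$ has a first-return map realizing $T$; thus the closed orbits of the dynamical system, i.e.\ the $\mathbb{S}^1$-valued points of $V_{x,s}=f_x\times_S\partial/\partial s$, are exactly the fixed points of the iterates of $T$. By the Grothendieck–Lefschetz trace formula the alternating traces of $T^m$ on the stalks enumerate these fixed points, so that the monodromy zeta function
\[ \prod_i \det\bigl(1-tT \mid R^i\zeta(K_\eta)\bigr)^{(-1)^{i+1}} \]
coincides with the dynamical zeta function counting periodic solutions. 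This exhibits the stalks as generated by the periodic solutions and, at the same time, ties the count to point-counting over finite fields, since Frobenius plays the role of $T$, as promised in the introduction.

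The hard part will be this last step. Two issues require care. First, one must give the notion of an $\mathbb{S}^1$-valued point, hence of a closed orbit, a rigorous meaning in the étale topology over a field of positive characteristic, where there is no honest circle; I expect this to proceed through the étale fundamental group of the punctured disk, with $\mathbb{S}^1$ replaced by $\hat{\mathbb{Z}}(1)$ and the monodromy generator playing the role of the time-one loop. Second, one must justify that the first-return map is genuinely the monodromy and control the contribution of the wild part of the inertia to the trace formula; the triviality statements for $I$ and for $I_{diff}^i$ should be exactly what forces the wild contribution to be accounted for by the generalized Milnor number rather than spoiling the count, but turning this into a clean identity is where I expect the technical work to concentrate.
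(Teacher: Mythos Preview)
Your unwinding of $\zeta$ into the nearby-cycle complex of $r\circ f$ and your subsequent appeal to Deligne match the paper's argument: the paper simply writes the stalk as
\[
H^{\ast}\bigl((f_x-\partial/\partial s)^{-1}(r^{-1}\eta^r),\,K\bigr),
\]
recognises this as the cohomology of the \'etale Milnor fibre of $r\circ f$, and invokes \cite{SGA7II} for the Milnor-number count. (Your use of Lemma~\ref{alemma} here is extraneous; in the paper that lemma is invoked only \emph{after} the theorem, to deduce triviality of the $I_{diff}^i$-action on $R^{\ast}\zeta$, not inside the proof of the theorem itself. Also, ``equal dimension with an isolated critical point'' does not by itself force $f$ to be finite, so your base-change justification needs a further hypothesis; the paper bypasses this by computing the stalk directly.)

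Where you genuinely diverge is in the dynamical identification. The paper does not pass through monodromy, first-return maps, or a Lefschetz trace formula at all. A ``closed orbit'' was \emph{defined} just before the theorem to be an $\mathbb{S}^1$-valued point of $V_{x,s}=f_x\times_S\partial/\partial s$, and in characteristic $\neq 2$ the level set $r^{-1}(\eta^r)$ is precisely what plays the role of $\mathbb{S}^1$; hence the points of the fibre $(f_x-\partial/\partial s)^{-1}(r^{-1}\eta^r)$ are, tautologically, the periodic solutions of $f_s=\partial/\partial s$. So the ``hard part'' you anticipate collapses in the paper to a one-line unwinding of definitions. Your route via the monodromy zeta function is a more substantive and independently interesting way to link vanishing cycles to dynamics, and it would indeed connect to Frobenius point-counts as the introduction promises, but it imports genuine difficulties --- the meaning of $\mathbb{S}^1$ in the \'etale world, control of the wild inertia in the trace formula --- that the paper's definitional shortcut simply sidesteps. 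If you pursue your approach you will be proving something strictly stronger than what the theorem, as stated, requires.
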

\begin{proof}
The stalks $R^{\ast} \zeta(K_{\eta})_{x, \eta^r}$ are 
$$H^{\ast}(r\circ f_x)\times_k (r\circ \partial/\partial s) \times_k \bar{\eta}^r, K)=H^{\ast}((f_x-\partial/\partial s)^{-1}(r^{-1}\eta^r))$$
which if $char\neq 2$ are the  periodic solutions to $f_s=\partial/\partial s$ or the vanishing cycles of $r\circ f$ in the sense of \cite{SGA7II}.
\end{proof}

\begin{lemma} The differential inertia groups act trivially on $R^{\ast} \zeta(K_{\eta}$
\end{lemma}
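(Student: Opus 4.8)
The plan is to reduce the assertion to Lemma~\ref{alemma} by means of Theorem~\ref{atheo}. Since a morphism of complexes of sheaves on the étale site is trivial as soon as it is trivial on every stalk, it suffices to show that each $I_{diff}^i$ acts trivially on the stalks of $R^{\ast}\zeta(K_\eta)$ over a point $(x,s)$ with $x\in V(f)$. The differential inertia groups act on these stalks only through the $\mathcal{D}_S$-module structure, and by construction the differential sheaf $\zeta=\bar{i}^{r\ast}\bar{j}^r_{\ast}(K_\eta)$ is assembled from the associated module $V=f-\partial/\partial s$.

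First I would invoke Theorem~\ref{atheo}. Because the characteristic is distinct from $2$, the quadratic form $r(x_1,\dots,x_n)=\sum x_i^2$ is nondegenerate and the theorem identifies the stalk $R^{\ast}\zeta(K_\eta)_{x,\eta^r}$ with the cohomology of $(f_x-\partial/\partial s)^{-1}(r^{-1}\eta^r)$, that is, with the vanishing cycles of $r\circ f$ at $x$, or equivalently with the periodic solutions of $f_s=\partial/\partial s$. In either description the point $(x,s)$ lies over the origin $s\in S$, so the generators of the stalk are supported on the special fiber $V_s=f_s-\partial/\partial s$.

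Next I would apply Lemma~\ref{alemma}, which states that the differential inertia groups act trivially on $V_s$. As the stalk is generated by elements coming from $V_s$ and the $I_{diff}^i$-action on them is the restriction of the action on $V_s$, the action on each stalk is trivial, whence the action on $R^{\ast}\zeta(K_\eta)$ is trivial. This runs parallel to the classical picture: the inertia group $I$ in the sequence $1\to I\to \text{Gal}(k(\eta)^{alg}/k(\eta))\to \text{Gal}(k(s)^{alg}/k(s))\to 1$ acts trivially on the sheaves of vanishing cycles, and under Theorem~\ref{atheo} this is exactly the action that the differential action is meant to match.

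The main obstacle will be the passage from triviality on $V_s$ to triviality on the derived object $R^{\ast}\zeta(K_\eta)$. One must verify that the $I_{diff}^i$-action commutes with the functors $\bar{j}^r_{\ast}$ and $\bar{i}^{r\ast}$ defining $\zeta$, and that taking $R^{\ast}$ preserves triviality, so that a trivial action on the input $V_s$ yields a trivial action on every cohomology sheaf. This is a compatibility of the differential inertia with the nearby and vanishing cycle formation; once it is established the conclusion follows at once from Lemma~\ref{alemma}.
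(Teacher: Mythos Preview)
Your proposal is correct and follows essentially the same route as the paper: the paper's proof consists of a single sentence referring back to the preceding discussion and to Lemma~\ref{alemma}, and your argument simply makes explicit how Theorem~\ref{atheo} serves as the bridge from $R^{\ast}\zeta(K_\eta)$ to $V_s$ so that Lemma~\ref{alemma} applies. The compatibility issue you flag at the end is not addressed in the paper either; it is implicitly subsumed under ``the discussions above.''
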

\begin{proof}
    See the discussions above and \hyperref[alemma]{Corollary \ref*{alemma} }.
\end{proof}

\begin{example} Consider the dynamical system 
$$dx/dt= y^2+ax^3+bx,\quad dy/dt = y - l$$
over $k$ the $p$-adic numbers for $p$ a large prime. We let 
$$f_i: \text{Spec} \mathbb{Z}/p\mathbb{Z}[x, y]\to \mathbb{Z}/p\mathbb{Z},$$
$$f_i(x, y)=i^2+ax^3+bx$$
with critical locus $\Sigma_i$
and let $l_i=\bigoplus_{j\in \Sigma_i} \dim H^0(\mathcal{F}_i^{ét})$ denote the sum of dimensions of the homology of the étale Milnor fibers of $f_i$. Then 
$$|E(\mathbb{Z}/p\mathbb{Z})|=\sum_{i=1}^p l_i +1$$
where $E(\mathbb{Z}/p\mathbb{Z})$ denotes the \emph{set} of points of $\mathcal{C}=\{y^2+ax^3+bx=0\}$. In fact since this set is finite,
$$|E(\mathbb{Z}/p\mathbb{Z})|=\sum_{i=0}^p \mathcal{C}\cap \{y=i\}(\mathbb{Z}/p\mathbb{Z})+1=\sum_{i=0}^p l_i+1$$
because at each point there is either one, two or three vanishing cycles according to whether the cubic equation defining the summand has one, two or three points so we get the result in light of the \hyperref[atheo]{Theorem \ref*{atheo} }. Here we added the point at infinity. 
\end{example}

\section{Appendix: Collatz Conjecture}
An outline of a proof would be to consider Collatz function as a continuous $2$-adic function $f: \mathbb{A}_k^1\to \mathbb{A}_k^1$.
The associated dynamical system has a stationary point at the origin $x_1=0$ and a stationary point at $x_2=1$. Corresponding to $x_2$ is unique tame vanishing cycle, since the local equation $3x+1$ has only tame ramification. Corresponding to $x_1$ is a map singularity; the map $x\mapsto x/2$ isn't defined over the residue field $k(x_1)=\mathbb{Z}/2\mathbb{Z}.$ Now, if there is a closed orbit then there is a vanishing cycle at $x_1$. This vanishing cycle would be in an étale neighborhood of the origin. Following \cite{SGA7II} it would be an element of 
$$f^{-1}(\text{Spec}\bigcup_l \text{Frac }\mathbb{F}_2^l((x)))\cap \text{Spec} \bigcup_l\mathbb{F}_2^l[[x]]$$
hence a solution in terms of a zero of an algebraic equation with $\mathbb{Z}/2\mathbb{Z}$-coefficients. So one possible proof of the conjecture would be to argue in detail that there can or cannot be any such solution due to $f$ not being continuous reduced modulo $2$.

\bibliographystyle{plain}
\bibliography{main.bib}

\end{document}